\newtheorem{lemma}{Lemma}[section]
\newtheorem{theorem}{Theorem}[section]
\numberwithin{equation}{section}
\newcommand{\beq}[1]{\begin{equation}\label{#1}}
\newcommand{\eeq}{\end{equation}}
\title[A new expander and improved bounds for $A(A+A)$]{A new expander and improved bounds for $A(A+A)$}
\author[ O. Roche-Newton]{Oliver Roche-Newton}
\address{O. Roche-Newton: School of Mathematics and Statistics, Wuhan University, Wuhan, Hubei Province, P.R.China. 430072 }
\email{o.rochenewton@gmail.com }
\begin{document}

\begin{abstract}
The main result in this paper concerns a new five-variable expander. It is proven that for any finite set of real numbers $A$,
$$|\{(a_1+a_2+a_3+a_4)^2+\log a_5 :a_1,a_2,a_3,a_4,a_5 \in A \}| \gg \frac{|A|^2}{\log |A|}.$$
This bound is optimal, up to logarithmic factors. The paper also gives new lower bounds for $|A(A-A)|$ and $|A(A+A)|$, improving on results from \cite{MORNS}. The new bounds are
$$|A(A-A)| \gtrapprox |A|^{3/2+\frac{1}{34}}$$
and
$$|A(A+A)| \gtrapprox |A|^{3/2+\frac{5}{242}}.$$
\end{abstract} 
\maketitle
\section{Introduction}

Throughout this paper, the standard notation
$\ll,\gg$ is applied to positive quantities in the usual way. Saying $X\gg Y$ means that $X\geq cY$, for some absolute constant $c>0$. All logarithms in the paper are base $2$. We use the symbols $\lessapprox, \gtrapprox$ to suppress both constant and logarithmic factors. To be precise, we write $X \gtrapprox Y$ if there is some absolute constant $c>0$ such that $X \gg Y/(\log X)^c$.

This paper is concerned with a particular variation of the sum-product problem. A fundamental idea in sum-product theory is that a finite set $A$ in a field $\mathbb F$ cannot be highly structured in both an additive and multiplicative sense. This is a guiding principle behind the Erd\H{o}s-Szemer\'{e}di sum-product conjecture, which states that for any finite set $A \subset \mathbb Z$ and all $\epsilon >0$,
$$\max \{ |A+A|, |AA| \} \geq c_{\epsilon}|A|^{2-\epsilon},$$
where $A+A:= \{a+b:a,b \in A \}$ is the \textit{sum set}, and the \textit{product set} $AA$ is defined analagously. The conjecture remains wide open. See the recent work of Konyagin and Shkredov \cite{KS2} for the most up to date bounds, and the references within for more background on this famous problem. See also chapter 8 of \cite{TV} for a more detailed, although now slightly outdated, introduction to the sum-product problem.

By the same principle, it is typically expected that a set which is defined by a combination of additive and multiplicative operations on elements of an input set $A$ will always be large compared to $A$. For example, it follows from an ingenious geometric argument of Ungar \cite{ungar} that for any finite set $A \subset \mathbb R$,
\begin{equation}
\left| \frac{A-A}{A-A} \right | \geq |A|^2 -2,
\label{ungarbound}
\end{equation}
where
$$\frac{A-A}{A-A}:= \left \{ \frac{a-b}{c-d}:a,b,c,d \in A, c\neq d \right \}.$$

We say that a function\footnote{We usually take $D=\mathbb R^d$, but we use a general $D\subset \mathbb R^d$ in this definition to avoid the possibility of dividing by zero.} $f: D \rightarrow \mathbb R$ is a \textit{$d$-variable expander} if $D \subset \mathbb R^d$ and it is true that there is some $\epsilon >0$ such that for any finite $A \subset \mathbb R$
$$|\{ f(a_1,\dots,a_d) : a_i \in A \}| \gg |A|^{1+\epsilon}.$$
So, inequality \eqref{ungarbound} shows that the function $f(x_1,x_2,x_3,x_4)=\frac{x_1-x_2}{x_3-x_4}$ is a $4$-variable expander.

Results on expanders which are tight up to constant and logarithmic factors are relatively rare. For $3$ variables, the only such result is due to Jones \cite{TJthesis}, who proved that
\begin{equation}
\left | \left \{ \frac{a-c}{a-b}:a,b,c \in A \right \} \right | \gg \frac{|A|^2}{\log |A|}.
\label{TJ}
\end{equation}
A slightly different proof of this inequality can also be found in \cite{SOCG}. For $4$ variables, as well as the aforementioned result \eqref{ungarbound}, it is known that
\begin{equation}
|\{(a-b)^2+(c-d)^2:a,b,c,d \in A \}| \gg \frac{|A|^2}{\log |A|}
\label{GK}
\end{equation}
and
\begin{equation}
|(A+A)(A+A)| \gg \frac{|A|^2}{\log |A|}.
\label{rectangles}
\end{equation}
The results are due to Guth and Katz \cite{GK} and Roche-Newton and Rudnev \cite{rectangles} respectively. A considerably more simple proof of \eqref{rectangles} was given in \cite{SOCG}. It was also established by Balog and Roche-Newton \cite{BORN} that
\begin{equation}
 \left | \frac{A+A}{A+A} \right | \geq 2|A|^2-1.
\label{balog}
\end{equation}
The following $5$-variable expander result was proven by Murphy, Roche-Newton and Shkredov \cite{MORNS}:
\begin{equation}
|A(A+A+A+A)| \gg \frac{|A|^2}{\log |A|}.
\label{a(a+a+a+a)}
\end{equation}
All of these results are optimal up to constant and logarithmic factors, as can be seen by taking $A$ to be an arithmetic progression, and this gives a complete list of the known optimal expander results for $5$ or less variables. In this paper, we add to the list by proving the following theorem:

\begin{theorem}\label{thm:main}
For any finite set of real numbers $A$,
$$|\{(a_1+a_2+a_3+a_4)^2+\log a_5 :a_i \in A \}| \gg \frac {|A|^2}{\log |A|}.$$
\end{theorem}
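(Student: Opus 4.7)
The first step is to rewrite the image of $f$ as a sumset. Since $\log$ is injective on $\R^+$ (and we may restrict to $A \cap \R^+$ at the cost of only a constant factor), the image of $f$ has the same cardinality as $C + B$, where $C := \{s^2 : s \in A+A+A+A\}$ and $B := \log A$. Writing $S := A+A+A+A$, we have $|C| = |S| \geq |A|$, and crucially $C$ is a \emph{convex set} in the additive-combinatorial sense, since $x \mapsto x^2$ is strictly convex on $\R^+$. The theorem reduces to establishing
$$|B + C| \gg \frac{|A|^2}{\log|A|}.$$

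By Cauchy--Schwarz, $|B+C| \geq (|B|\,|C|)^2 / E^+(B, C)$, so it suffices to prove the energy estimate
$$E^+(B, C) := \Abs{\{(s_1, s_2, a_1, a_2) \in S^2 \times A^2 : s_1^2 - s_2^2 = \log(a_2/a_1)\}} \ll |S|^2 \log|A|.$$
The diagonal $s_1 = s_2$ contributes at most $|S|\,|A| \leq |S|^2$, within the target. For the off-diagonal, factorise the equation as $(s_1 - s_2)(s_1+s_2) = \log(a_2/a_1)$ and write the energy as $\sum_v r_{C-C}(v)\,r_{B-B}(v)$, where $r_{X-X}(v) := |\{(x,x') \in X^2 : x - x' = v\}|$. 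The convex structure of $C$ supplies the Iosevich--Konyagin--Rudnev--Ten bound $E^+(C) \ll |S|^{5/2}\log|S|$ (via Szemer\'edi--Trotter), and combining this with a dyadic decomposition of $r_{B-B}$ and a weighted incidence count (points $A \times A$, lines $\{a_2 = \rho a_1 : \rho \in \exp(C-C)\}$) should yield the desired control.

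The chief obstacle is obtaining merely a $\log|A|$ loss, rather than a polynomial loss. A bare Cauchy--Schwarz $E^+(B,C) \leq \sqrt{E^+(B) E^+(C)}$ is too weak because $E^+(B) = E^\times(A)$, the multiplicative energy of $A$, can be as large as $|A|^2 (\log|A|)^{O(1)}$, for instance when $A$ is an arithmetic progression. The remedy is likely a sum-product style dichotomy interlaced with the convex-set bounds: either $|S|$ is already $\gg |A|^2/\log|A|$, in which case the crude bound $|B+C| \geq |C|=|S|$ suffices, or $A$ has small additive doubling, so that Pl\"unnecke--Ruzsa together with a sum-product inequality forces $|AA|$ to be large, making $B+B = \log(AA)$ well-distributed and bounding the off-diagonal energy contribution via the weighted incidence scheme sketched above.
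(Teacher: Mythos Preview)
Your proposal has a genuine gap at the outset: the set $C=\{s^2:s\in S\}$ is \emph{not} a convex set in the additive-combinatorial sense. A convex set is by definition one whose consecutive gaps are strictly increasing---equivalently, the image of $\{1,\dots,n\}$ under a strictly convex function. The image of an \emph{arbitrary} finite set under a convex function need not have this property: for $S=\{1,100,101\}$ one gets $C=\{1,10000,10201\}$ with gaps $9999$ and $201$. So the Iosevich--Konyagin--Rudnev--Ten bound $E^+(C)\ll|C|^{5/2}$ is simply unavailable here, and the mechanism you propose for controlling $E^+(B,C)$ loses its main input.

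Even granting that bound, the sketch does not close. Your target $E^+(\log A,C)\ll|C|^2\log|A|$ is essentially optimal, and neither the convex-set estimate nor the vague dichotomy you outline delivers it. In the small-doubling branch $|S|\le K|A|$, Solymosi gives $E^+(\log A)=E^*(A)\ll K^2|A|^2\log|A|$; combined (via Cauchy--Schwarz, which you acknowledge is weak but never actually replace by anything concrete) with $E^+(C)\ll|C|^{5/2}$ this yields only $|B+C|\gg|A||C|^{3/4}/K\approx|A|^{7/4}K^{-1/4}$, well short of $|A|^2$. The ``weighted incidence scheme'' you allude to is never specified, and it is precisely the missing idea.

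The paper takes a different route and leans on deeper input. It does not pass to the unweighted set $C$ at all; it keeps the representation multiplicities and bounds the weighted second moment
\[
\sum_x r_{(B+B)^2-(B+B)^2}(x)^2 \ll |B|^6\log|B|,\qquad B=A+A,
\]
which is exactly the Guth--Katz bound (Theorem~\ref{GKthm}). Pairing this with Solymosi's $E^+(\log A)=E^*(A)\ll|A+A|^2\log|A|$, the factors of $|A+A|^2$ cancel and the claimed estimate drops out in two lines. The paper explicitly remarks that it knows no proof of the key lemma avoiding Guth--Katz; your attempt to substitute elementary convex-set or Szemer\'edi--Trotter bounds is precisely what does not appear to work.
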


This gives an optimal bound, up to log factors, for the admittedly curious expander function $$f(a_1,a_2,a_3,a_4,a_5)=(a_1+a_2+a_3+a_4)^2+\log a_5.$$

This paper also considers the more natural expander $f(a,b,c)=a(b-c)$. It is easy to use the Szemer\'{e}di-Trotter Theorem to prove that $|A(A-A)| \gg |A|^{3/2}.$ See \cite[Exercise 8.3.3]{TV} for a similar result. In \cite{MORNS}, this was improved to $|A(A-A)| \gg |A|^{\frac{3}{2}+\frac{1}{112}}$. Here, we improve this further:
\begin{theorem}
For any finite set $A$ of real numbers
$$|A(A-A)| \gtrapprox |A|^{\frac{3}{2}+\frac{1}{34}}.$$
\end{theorem}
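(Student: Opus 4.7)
Let $M := |A(A-A)|$. The plan is to set up a Cauchy--Schwarz/incidence reduction and then sharpen one of the inputs using a sum-product ingredient, in the spirit of the techniques developed for Theorem~\ref{thm:main}. Define the representation function
$$r(x) := \bigl|\{(a,b,c)\in A^3 : a(b-c)=x\}\bigr|, \qquad \sum_x r(x)=|A|^3,$$
so Cauchy--Schwarz gives $M \cdot E \geq |A|^6$ with $E := \sum_x r(x)^2$. The energy $E$ counts sextuples $(a_1,b_1,c_1,a_2,b_2,c_2)\in A^6$ with $a_1(b_1-c_1)=a_2(b_2-c_2)$, and a direct application of the Szemer\'edi--Trotter theorem to the line family $\ell_{a,b} : y = a(x-b)$ with $(a,b)\in A^2$, together with the point set $A\times A(A-A)$, bounds $E \lessapprox |A|^{9/2}$ and recovers the classical $M \gg |A|^{3/2}$.

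To beat $3/2$, I would perform a dyadic decomposition of $r$ and restrict to a popular level set $P_\tau = \{x : r(x) \sim \tau\}$, on which $\tau|P_\tau|\sim|A|^3$ and $|P_\tau|\leq M$. Szemer\'edi--Trotter applied only to incidences supported on $A\times P_\tau$ does not, on its own, produce a gain; the extra input should be an energy bound for $A$ of Konyagin--Shkredov type, exploiting structural information forced on $A$ when $M$ is small. A natural way to extract such structure is through the identity $\log A(A-A) = \log A + \log(A-A)$, valid after restricting $A$ to a positive subset of comparable size; small $M$ then propagates via Pl\"unnecke--Ruzsa into control on multi-fold sumsets of $\log A$, which on exponentiation yields multiplicative information on $A$ (for instance, upper bounds on $E^+(A)$ or $E_3^+(A)$ in terms of $|AA|$) that can be fed back into the incidence bound on $E$ to tighten the Szemer\'edi--Trotter step on the localised configuration.

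The main obstacle, and the source of the specific exponent $\tfrac{1}{34}$, is the optimisation over the dyadic parameter $\tau$ and the exponents appearing in the Szemer\'edi--Trotter and sum-product inputs; this should reduce to a small linear programme in these variables. I expect the hardest step to be identifying the sharpest available energy inequality for $A$ to combine with Szemer\'edi--Trotter, and arranging the argument so that the polylogarithmic losses from the dyadic decomposition, the incidence bound, and the Pl\"unnecke step can all be absorbed into the $\gtrapprox$ notation. The reduction to the positive part of $A$, needed to make sense of the $\log$ identity, is standard and costs only a constant factor in $|A|$.
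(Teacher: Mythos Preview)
Your proposal is not yet a proof: it is a collection of heuristics, and the specific route you outline (dyadic decomposition of $r$, then Pl\"unnecke--Ruzsa on $\log A(A-A)=\log A+\log(A-A)$, then feeding ``multiplicative information'' back into a Szemer\'edi--Trotter bound on the localised energy) is neither carried out nor clearly viable. In particular, the Pl\"unnecke step from smallness of $|\log A+\log(A-A)|$ gives you control on iterated sums of one of the two summands, but you have not explained which iterated sumset you obtain, nor how it plugs into a sharper incidence bound on your energy $E$. The phrase ``upper bounds on $E^+(A)$ or $E_3^+(A)$ in terms of $|AA|$'' points in the wrong direction: what is actually needed is that \emph{large multiplicative energy} forces $|A-A|$ to be large, and that is a Konyagin--Shkredov statement about the parameter $d_*(A)$, not a consequence of Pl\"unnecke on $|AA|$.

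The paper's argument is structurally different and much simpler than what you sketch. Write $E^*(A)=|A|^3/K$ and split into two cases on $K$. If $K$ is large (say $K\ge|A|^{1/17}$), then the inequality $E^*(A)\,|A(A-A)|^2\gg|A|^6/\log|A|$ of Lemma~\ref{MORNSlemma} already gives $|A(A-A)|\gtrapprox|A|^{3/2}K^{1/2}\ge|A|^{3/2+1/34}$; this lemma is the Cauchy--Schwarz you set up, but with a \emph{second} Cauchy--Schwarz separating $E^*(A)$ from the ``rectangle'' energy $\sum_x r_{(A-A)(A-A)}(x)^2$, which is $O(|A|^6\log|A|)$ by Theorem~\ref{rectanglesthm}. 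If $K$ is small, the crucial observation you are missing is the trivial containment $A-A\subset A(A-A)$ (after scaling by any fixed $a\in A$), so it suffices to show $|A-A|\gtrapprox|A|^{8/5}/K^{6/5}$. This is Lemma~\ref{BSG2}, proved by extracting from the hypothesis $E^*(A)\ge|A|^3/K$ a large subset $A'\subset A$ with small $d_*(A')$ and then applying Lemma~\ref{A-A}. The exponent $\tfrac{1}{34}$ comes from balancing $|A|^{3/2}K^{1/2}$ against $|A|^{8/5}K^{-6/5}$ at $K=|A|^{1/17}$, not from any linear programme over a dyadic parameter $\tau$.
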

Similarly, we prove the following result for the expander $f(a,b,c)=a(b+c)$.
\begin{theorem}
For any finite set $A$ of real numbers
$$|A(A+A)| \gtrapprox |A|^{\frac{3}{2}+\frac{5}{242}}.$$
\end{theorem}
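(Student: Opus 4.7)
The plan is to adapt the Elekes-style incidence framework used in \cite{MORNS} for the previous bound on $|A(A+A)|$, while sharpening one of its internal energy estimates. The starting point is the standard Szemer\'edi--Trotter argument: the lines $\ell_{a,b}: y = a(x+b)$ for $a,b \in A$ contain the points $(c, a(b+c)) \in A \times A(A+A)$, giving at least $|A|^3$ incidences and yielding the threshold bound $|A(A+A)| \gg |A|^{3/2}$. A Cauchy--Schwarz step then shows that, should $|A(A+A)|$ fail to exceed this bound substantially, the collision count
\[ E := |\{(a_1,b_1,c_1,a_2,b_2,c_2) \in A^6 : a_1(b_1+c_1) = a_2(b_2+c_2)\}| \]
must satisfy $E \geq |A|^6/|A(A+A)|$, and an upper bound on $E$ of the same order is what must be forbidden.

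The next step is to upper bound $E$ through a finer decomposition. Writing $s_i = b_i + c_i$ and weighting by the additive representation function $r_{A+A}(s)$, $E$ becomes a weighted count of multiplicative quadruples $a_1 s_1 = a_2 s_2$ with $a_i \in A$ and $s_i \in A+A$. Dyadically decomposing $A+A$ according to the size of $r_{A+A}$ and applying Szemer\'edi--Trotter (or the point-plane bound of Rudnev) on each level set should control $E$ in terms of $|A|$, $|A+A|$, $|AA|$, the multiplicative energy $E^\times(A)$, and possibly a higher moment such as $E_3^\times(A)$.

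From here I expect a case split on $E^\times(A)$. When $E^\times(A)$ is small, the incidence bound itself suffices. When $E^\times(A)$ is large, the Balog--Szemer\'edi--Gowers theorem produces a dense $A'\subset A$ of small multiplicative doubling, to which one applies Pl\"unnecke--Ruzsa together with a Solymosi-type sum-product estimate, transferring the lower bound back to $A(A+A)$.

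The main obstacle is the asymmetry between $A+A$ and $A-A$: the set $A-A$ is symmetric and contains $0$, which in the companion theorem permits certain configurations of points inside the pencil structure through $(b,0)$ to be handled with extra slack. In the $A+A$ setting this slack is unavailable, which is plausibly the reason the exponent drops from $1/34$ to $5/242$. The delicate balancing of the dyadic levels against the BSG output, combined with the best known bounds relating $E^\times(A)$, $E_3^\times(A)$, and $|AA|$, is where the precise value $5/242$ should emerge after optimization.
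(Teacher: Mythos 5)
Your Case 1 (small multiplicative energy) is in the right spirit: when $E^*(A)$ is small one applies the incidence bound in the form $E^*(A)\,|A(A+A)|^2 \gg |A|^6/\log|A|$ (this is Lemma \ref{MORNSlemma} in the paper) and wins directly. But your Case 2 is exactly the old argument from \cite{MORNS}, and it is precisely what the paper is replacing. Passing to a dense $A'\subset A$ via Balog--Szemer\'edi--Gowers, then applying Pl\"unnecke--Ruzsa and a Solymosi-type estimate, is how \cite{MORNS} obtained $|A(A+A)|\gg |A|^{3/2+1/178}$; rerunning that argument will not give you $5/242$. The paper states this explicitly: the entire point of Section 4 is to \emph{avoid} BSG, because BSG incurs polynomial losses that the $d_*$-machinery of Konyagin and Shkredov sidesteps.

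The missing ingredient is Lemma \ref{BSG2} and the chain of lemmas feeding into it. One defines the Konyagin--Shkredov quantity $d_*(A)$, proves (via dyadic pigeonholing in a popular dilate $S_\tau$) that if $E^*(A)\ge |A|^3/K$ then there is a large $A'\subset A$ with $d_*(A')$ controlled directly in terms of $K$, and then invokes Shkredov's bound $|A+A|\gtrapprox |A|^{58/37}/d_*^{21/37}(A)$ (Lemma \ref{ShSum}, which combines \cite{Sh} with \cite{KS2}). This gives $|A+A|\gtrapprox |A|^{58/37}/K^{42/37}$ without any appeal to BSG or Pl\"unnecke--Ruzsa, and $|A(A+A)|\ge|A+A|$ finishes Case 2. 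Balancing the two cases at $K=|A|^{5/121}$ produces the exponent $\tfrac32+\tfrac{5}{242}$.

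Your explanation of why the $A+A$ exponent is worse than the $A-A$ one is also not quite right. It has nothing to do with $0\in A-A$ or pencils through $(b,0)$. The asymmetry is entirely in the two inputs: for difference sets one has the clean bound $|A-A|\gg |A|^{8/5}/(d_*^{3/5}(A)\log^{2/5}|A|)$, proved with a short third-moment argument (Lemma \ref{A-A}), whereas for sum sets the comparable bound $|A+A|\gtrapprox |A|^{58/37}/d_*^{21/37}(A)$ has weaker exponents and comes from the much more involved machinery of \cite{Sh}. The gap between $8/5$ and $58/37$, and between $3/5$ and $21/37$, is what propagates to $1/34$ versus $5/242$.
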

This improves the bound $|A(A+A)| \gg |A|^{\frac{3}{2}+\frac{1}{178}}$ from \cite{MORNS}. The proofs of these two theorems use ideas from a recent paper of Konyagin and Shkredov \cite{KS2} to streamline the original argument by avoiding using the Balog-Szemer\'{e}di-Gowers Theorem.

\section{Notation and preliminary results}

Given finite sets $A,B\subset \mathbb R$, the \textit{additive energy of $A$ and $B$} is the number of solutions to the equation
$$a_1-b_1=a_2-b_2$$
such that $a_1,a_2 \in A$ and $b_1,b_2 \in B$. The additive energy is denoted $E^+(A,B)$. Let 
$$r_{A-B}(x):=|\{(a,b) \in A \times B: a-b=x \}|.$$
Note that $r_{A-B}(x)=|A \cap (B+x)|$. The notation of the representation function $r$ will be used with flexibility throughout this paper, with the information about the kind of representations it counts being contained in a subscipt. For example, $$r_{(A-A)^2+(A-A)^2}(x);=|\{(a_1,a_2,a_3,a_4) \in A^4 : (a_1-a_2)^2+(a_3-a_4)^2=x \}|.$$

Note that
$$E^+(A,B)= \sum_{x \in A-B}r^2_{A-B}(x).$$
The shorthand $E^+(A,A)=E^+(A)$ is used. The notion of energy can be extended to an arbitrary power $k$. We define $E^+_k(A)$ by the formula
$$E^+_k(A)= \sum_{x \in A-A}r^k_{A-A}(x).$$

Similarly, the \textit{multiplicative energy of $A$ and $B$}, denoted $E^*(A,B)$, is the number of solutions to the equation
$$\frac{a_1}{b_1}=\frac{a_2}{b_2},$$
such that $a_1,a_2 \in A$ and $b_1,b_2 \in B$.

The notions of additive and multiplicative energy have been central in the literature on sum-product estimates. For example, the key ingredient in the beautiful work of Solymosi \cite{solymosi}, which until recently held the record for the best known sum-product estimate, is the following bound:

\begin{theorem} \label{solymosi}
For any finite set $A \subset \mathbb R$,
$$E^*(A) \ll |A+A|^2 \log |A|.$$
\end{theorem}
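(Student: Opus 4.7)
The plan is to implement Solymosi's geometric approach: interpret the multiplicative energy as counting points of $A\times A$ on lines through the origin, and use the Minkowski sum of two such point sets to embed many disjoint pieces into $(A+A)\times(A+A)$. First I would reduce to the case $A\subset \mathbb{R}_{>0}$ by splitting $A$ into its positive and negative parts and noting that the multiplicative energy decomposes into a bounded number of pieces of the same form (products $a/b$ where $a,b$ have prescribed signs). For each ratio $\lambda$ that appears in $A/A$, set $P_\lambda=\{(x,y)\in A\times A : y/x=\lambda\}$, so $P_\lambda$ lies on the line through the origin of slope $\lambda$, and
$$E^*(A)=\sum_{\lambda}|P_\lambda|^2.$$

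Next I would enumerate the occurring slopes as $\lambda_1<\lambda_2<\dots<\lambda_m$ and establish the two geometric facts that drive the argument. First, since the lines of slopes $\lambda_i$ and $\lambda_{i+1}$ are not parallel, the addition map $P_{\lambda_i}\times P_{\lambda_{i+1}}\to P_{\lambda_i}+P_{\lambda_{i+1}}$ is injective, so $|P_{\lambda_i}+P_{\lambda_{i+1}}|=|P_{\lambda_i}|\cdot |P_{\lambda_{i+1}}|$. Second, if $p\in P_{\lambda_i}$ and $q\in P_{\lambda_{i+1}}$ then $p+q$ lies in the open angular wedge between the rays of slope $\lambda_i$ and $\lambda_{i+1}$ in the positive quadrant (by the mediant inequality applied to the coordinates). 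Consequently, for indices $i<i'$ with $\lambda_i<\lambda_{i+1}\le \lambda_{i'}<\lambda_{i'+1}$, the sets $P_{\lambda_i}+P_{\lambda_{i+1}}$ and $P_{\lambda_{i'}}+P_{\lambda_{i'+1}}$ occupy disjoint angular regions. Pairing indices as $(1,2),(3,4),\dots$ makes every such wedge strictly disjoint, yielding
$$\sum_{k}|P_{\lambda_{2k-1}}|\cdot |P_{\lambda_{2k}}|\le |(A+A)\times(A+A)|=|A+A|^2,$$
and the same holds for the complementary pairing $(2,3),(4,5),\dots$.

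The final step is a dyadic pigeonhole on the sizes $|P_\lambda|$. Partition the slopes into $O(\log |A|)$ classes $S_j=\{\lambda : 2^j\le |P_\lambda|<2^{j+1}\}$; some class $S_j$ contributes $\gtrsim E^*(A)/\log|A|$ to $\sum_\lambda |P_\lambda|^2$, so $|S_j|\cdot 2^{2j}\gtrsim E^*(A)/\log|A|$. Applying the geometric bound above, but restricted to the slopes in $S_j$ taken in their induced order and paired alternately, each product $|P_{\lambda_i}|\cdot|P_{\lambda_{i'}}|$ in the sum is $\ge 2^{2j}$, while the wedges remain pairwise disjoint because they are nested between consecutive slopes of the full sequence. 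This gives $|S_j|\cdot 2^{2j}\ll |A+A|^2$, and combining with the previous lower bound yields $E^*(A)\ll |A+A|^2\log|A|$.

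The main obstacle I expect is a careful justification of the wedge-disjointness once one passes to a subset of slopes within a dyadic class, since the geometric separation must be verified for the alternating pairing in the induced order rather than for genuinely adjacent slopes; everything else is a clean computation once the setup is in place.
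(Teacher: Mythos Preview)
The paper does not supply its own proof of this theorem; it is quoted as a known result of Solymosi \cite{solymosi} and used as a black box. Your proposal is a correct outline of Solymosi's original geometric argument, so there is nothing to compare against beyond noting that you have reproduced the intended proof.

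Your anticipated obstacle is not a genuine obstacle. For \emph{any} two positive slopes $\mu<\mu'$ (adjacent or not), every point of $P_\mu+P_{\mu'}$ lies in the open angular wedge between the rays of slopes $\mu$ and $\mu'$, by the mediant inequality you already invoke. Hence if $\mu_1<\mu_2<\dots<\mu_k$ are the slopes in the dyadic class $S_j$, taken in their induced order, the sums $P_{\mu_i}+P_{\mu_{i+1}}$ for $i=1,\dots,k-1$ lie in the open wedges between consecutive $\mu_i$, which are pairwise disjoint; no alternating pairing is required, and you may sum over all $k-1$ consecutive pairs directly.
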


We will also call upon the following result on the relationship between different types of energy:

\begin{lemma}[\cite{Li}, Lemma 2.4 and Lemma 2.5] \label{li} For any finite sets $A,B \subset \mathbb R$,
$$|A|^2 (E_{1.5}^+(A))^2 \leq (E_3^+(A))^{2/3}(E_3^{+}(B))^{1/3}E(A,A-B).$$
\end{lemma}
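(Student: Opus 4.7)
My plan is to prove the inequality by the standard Konyagin--Shkredov scheme: introduce the pivot set $B$ through the convolution identity
$$r_{A-(A-B)}(y)=\sum_{b\in B}r_{A-A}(y-b),$$
and then apply H\"older's inequality with three factors so that the resulting sum splits into the three terms on the right-hand side. The exponents $2/3$, $1/3$, $1$ in the statement strongly suggest a H\"older application with reciprocal exponents $3,3,3$ after an initial squaring, and this is the shape I aim for.

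Concretely, I start from the identity $E^+_{1.5}(A)=\sum_x r_{A-A}(x)^{3/2}=\sum_{(a,a')\in A^2}r_{A-A}(a-a')^{1/2}$, then multiply by the trivial factors $|A|^2 = \sum_{(a_3,a_4)\in A^2}1$ (and temporarily by $|B|$) in order to convert $|A|^2E^+_{1.5}(A)$ into a sum over $A^4\times B$. Performing the change of variable $y=(a-b)-(a'-b)=a-a'$ in the inner sum and using the convolution identity, the whole expression can be re-expressed as a single sum over $y$ in which each summand is a product of three factors of the form $r_{A-A}(y)^{\alpha}$, $r_{B-B}(y)^{\beta}$, and $r_{A-(A-B)}(y)^{\gamma}$ with $\alpha+\beta+\gamma$ matching the moments on both sides of the statement.

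Applying H\"older's inequality with exponents $(3,3,3)$ to this single-variable sum then produces the three factors
$$\bigl(\sum_y r_{A-A}(y)^3\bigr)^{1/3}\cdot\bigl(\sum_y r_{B-B}(y)^3\bigr)^{1/6}\cdot\bigl(\sum_y r_{A-(A-B)}(y)^2\bigr)^{1/2}=E^+_3(A)^{1/3}E^+_3(B)^{1/6}E(A,A-B)^{1/2}.$$
Squaring both sides of this H\"older bound converts the exponents $1/3$, $1/6$, $1/2$ into $2/3$, $1/3$, $1$ respectively, which is exactly the asymmetry we want on the right-hand side. After dividing out the auxiliary powers of $|B|$ that were introduced in the setup, the stated inequality $|A|^2(E^+_{1.5}(A))^2\le(E^+_3(A))^{2/3}(E^+_3(B))^{1/3}E(A,A-B)$ drops out.

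The main obstacle, as usual for this type of argument, is the bookkeeping in the very first step: the re-parameterisation must be arranged so that after squaring the H\"older bound, all stray factors of $|A|$ and $|B|$ cancel precisely. This delicacy is forced by the non-integer exponent $3/2$ of $E^+_{1.5}$, which has to be interpolated between the ``moment-$1$'' quantity $|A|^2=\sum_y r_{A-A}(y)$ and the ``moment-$3$'' quantity $E^+_3(A)=\sum_y r_{A-A}(y)^3$. The H\"older tuple $(3,3,3)$ treats the three resulting pieces symmetrically, and it is the squaring step that produces the final asymmetric $(2/3,1/3,1)$ splitting; any other Hölder exponents leave fractional powers of $|A|$ or $|B|$ that cannot be absorbed.
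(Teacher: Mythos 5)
First, a caveat: the paper does not prove this lemma, it quotes it from Li's paper, so I am judging your argument on its own terms. It has one local error and one structural one. The local error is the ``convolution identity'' $r_{A-(A-B)}(y)=\sum_{b\in B}r_{A-A}(y-b)$: since $A-B$ is a \emph{set} in the definition of $E(A,A-B)$, the correct relation is
$$\sum_{b\in B}r_{A-A}(y-b)=\sum_{a\in A}r_{A-B}(a-y)\;\geq\; r_{A-(A-B)}(y),$$
and the inequality points the wrong way for you: any argument built on the left-hand quantity terminates in the larger, multiplicity-weighted energy $\sum_y\bigl(\sum_{b}r_{A-A}(y-b)\bigr)^2$ rather than in $E(A,A-B)=\sum_y r^2_{A-(A-B)}(y)$, and it is the set version that is needed downstream (the paper later bounds $E^+(A,F)$ for a set $F$). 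The structural error is worse: the right-hand side cannot arise from a single H\"older application to a one-variable sum $\sum_y r_{A-A}(y)^{\alpha}r_{B-B}(y)^{\beta}r_{A-(A-B)}(y)^{\gamma}$. For generic $A,B$ the supports of $r_{A-A}$ and $r_{B-B}$ meet only at $y=0$, while $r_{A-(A-B)}(0)=|A\cap(A-B)|$ can be $0$, so your intermediate sum vanishes even though $|A|^2(E^+_{1.5}(A))^2>0$; no choice of exponents repairs this. (Your bookkeeping also slips here: you announce exponents $(3,3,3)$, but the three displayed factors correspond to $(3,6,2)$.)

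The actual mechanism is a \emph{two}-variable sum with two separate inequalities. Writing $A_x=A\cap(A+x)$ and $D=A-B$, one has
$$|B|\,E^+_{1.5}(A)=\sum_{x}\sum_{s}|A_x|^{1/2}\,|A\cap(A+x)\cap(B+s)|,$$
a nonzero summand forces both $s\in D$ and $s-x\in D$, and Cauchy--Schwarz over the pairs $(x,s)$ splits this into the ``support'' factor $\sum_x r_{A-A}(x)\,|D\cap(D+x)|=\sum_x r_{A-A}(x)r_{D-D}(x)=E(A,A-B)$ and the mixed moment $\sum_{x,s}|A\cap(A+x)\cap(B+s)|^2=\sum_z r^2_{A-A}(z)r_{B-B}(z)$, which a final H\"older step with exponents $(3/2,3)$ bounds by $(E^+_3(A))^{2/3}(E^+_3(B))^{1/3}$. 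So $E(A,A-B)$ enters via a support-counting Cauchy--Schwarz, not as a third H\"older slot. Note also that this derivation yields $|B|^2$ rather than $|A|^2$ on the left; the two coincide in the only case the paper uses ($B=A$), but for $B$ a singleton and $A$ generic the version with $|A|^2$ is actually false ($|A|^2(E^+_{1.5}(A))^2\sim|A|^6$ against a right-hand side $\sim|A|^4$), so no correct proof of the literal statement for arbitrary $A,B$ can exist -- another sign that the bookkeeping you describe was never going to close.
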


In \cite{MORNS}, the following lemma played an important role:

\begin{lemma} \label{MORNSlemma}
For any finite sets $A,B,C \in \mathbb R$,
$$E^*(A)|A(B+C)|^2 \gg \frac{|A|^4|B||C|}{\log |A|}.$$
\end{lemma}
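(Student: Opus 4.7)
The plan is to adapt Solymosi's proof of the bound $E^*(A) \ll |A+A|^2 \log|A|$ to this three-set setting, with $A(B+C)$ playing the role of $A+A$.

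First, I would carry out a dyadic pigeonhole on the multiplicative ratios: choose $r$ and a subset $S \subseteq A/A$ with $r_{A/A}(s) \in [r, 2r)$ for every $s \in S$ and $|S|r^2 \gg E^*(A)/\log|A|$. Writing $A_s := A \cap s^{-1}A$, we then have $|A_s| \sim r$ for $s \in S$.

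Next, on each line $\ell_s = \{y = sx\}$ I would place the weighted collection of points
$$P_s := \{(\alpha(b+c),\, s\alpha(b+c)) : \alpha \in A_s,\ b \in B,\ c \in C\}.$$
Both coordinates lie in $A(B+C)$, because $\alpha$ and $s\alpha$ are both in $A$. The total multiplicity on $\ell_s$ is $|A_s|\cdot|B|\cdot|C| = r|B||C|$, while the number of distinct points on $\ell_s$ is $|A_s(B+C)|$. Enumerating $S = \{s_1 < s_2 < \cdots\}$ in increasing order, Solymosi's cone observation shows that Minkowski sums $P_{s_i}+P_{s_{i+1}}$ are realised with full product multiplicity and, across distinct consecutive pairs, land in disjoint angular sectors of the plane.

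To extract $|A(B+C)|^2$ (rather than the naive $|A(B+C)+A(B+C)|^2$ that a direct Solymosi-style Minkowski-sum argument would yield), I would use the inequality
$$|A(B+C)|^2 \geq \sum_{s \in A/A} |A_s(B+C)|,$$
valid because each $s$ contributes distinct pairs $(x, sx) \in A(B+C)^2$ with $x \in A_s(B+C)$. A Cauchy--Schwarz on each line gives $|A_s(B+C)| \geq (|A_s||B||C|)^2 / N_s$, where $N_s$ counts the multiplicative collisions $\alpha_1(b_1+c_1) = \alpha_2(b_2+c_2)$ with $\alpha_i \in A_s$. Summing over $s$ via Cauchy--Schwarz and feeding in the dyadic lower bound $|S|r^2 \gg E^*(A)/\log|A|$ should produce the claimed bound.

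The main obstacle is calibrating the Cauchy--Schwarz step so that one actually gets $|A|^4|B||C|$ on the right-hand side and not the weaker $|A|^4|B|^2|C|^2/E^+(B,C)$ that comes out of a crude global Cauchy--Schwarz: the Solymosi ordering on $A/A$ together with the disjointness of angular sectors must be used to replace the trivial bound on $\sum_s N_s$ by a genuinely sharper one, and this is the delicate point of the argument.
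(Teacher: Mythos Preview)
Your proposal does not close. The inequality $|A(B+C)|^2 \geq \sum_{s} |A_s(B+C)|$ is correct, and so is the per-line Cauchy--Schwarz bound $|A_s(B+C)| \geq (|A_s||B||C|)^2/N_s$. The gap is exactly where you locate it: to finish you would need something like $\sum_{s\in S} N_s \lessapprox |S|r^2\,|B||C|$, and the Solymosi ordering cannot supply this. Ordering the slopes and invoking disjoint angular sectors gives a lower bound on the \emph{Minkowski sums} $|P_{s_i}+P_{s_{i+1}}|$; it says nothing about the collision counts $N_s$, which live on a single line and are indifferent to how the lines are arranged relative to one another. In fact one checks directly that
\[
\sum_{s\in A/A} N_s \;=\; \sum_t r_{A/A}(t)^2\,\bigl|\{(b_1,c_1,b_2,c_2):(b_2+c_2)/(b_1+c_1)=t\}\bigr|,
\]
a quantity with no sector structure to exploit. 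So the argument stalls precisely at the step you yourself flag as the obstacle, and the tool you name is the wrong one for it.

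The paper's route avoids this entirely. One double-counts the number $T$ of solutions to $a_1(b_1+c_1)=a_2(b_2+c_2)$: from below, Cauchy--Schwarz gives $T \geq |A|^2|B|^2|C|^2/|A(B+C)|$; from above, Cauchy--Schwarz gives $T^2 \leq E^*(A)\cdot M$, where $M$ is the number of solutions to $(b_1+c_1)(b_4+c_4)=(b_2+c_2)(b_3+c_3)$. The bound $M \ll (|B||C|)^3\log(|B||C|)$ is Theorem~\ref{rectanglesthm} (in its general point-set form), and combining the two inequalities yields the lemma immediately---exactly parallel to the proof of Lemma~\ref{newlemma}. The original argument in \cite{MORNS} reaches the same conclusion using only the Szemer\'edi--Trotter Theorem, but again via an incidence bound on the collision count, not via consecutive-slope geometry.
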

The proof in \cite{MORNS} uses only the Szemer\'{e}di-Trotter Theorem, but it can also be proved in a more superficially straightforward way by using ideas from the work of Guth and Katz \cite{GK} on the Erd\H{o}s distinct distance problem. In particular, Lemma \ref{MORNSlemma} follows very easily if we assume the following result from \cite{rectangles}:

\begin{theorem} \label{rectanglesthm} For any finite set $A \subset \mathbb R$, the number of solutions to the equation
$$(a_1 - a_2)(a_3 - a_4)=(a_5-a_6)(a_7-a_8)$$
such that $a_1,\dots,a_8 \in A$ is $O(|A|^6 \log |A|)$.
\end{theorem}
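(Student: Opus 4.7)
The plan is to use the Elekes-Sharir-Guth-Katz framework: interpret the equation via the action of a symmetry group, reduce to counting line incidences in $\mathbb R^3$, and apply the Guth-Katz theorem. Observe that $(x_1 - x_2)(y_1 - y_2)$ is preserved by the three-parameter group $G$ of affine maps $T_{\lambda, u, v}(x,y) = (\lambda x + u, \lambda^{-1} y + v)$ with $\lambda \neq 0$. Re-index the eight variables as four points $p_1 = (a_1, a_7)$, $p_2 = (a_2, a_8)$, $p_3 = (a_5, a_3)$, $p_4 = (a_6, a_4)$ in $P := A \times A$; the equation then becomes the condition that the lines $p_1 p_2$ and $p_3 p_4$ are parallel. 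For a non-degenerate quadruple, a unique $T \in G$ sends $(p_1, p_2)$ to $(p_3, p_4)$, so with $N(T) := |\{p \in P : T(p) \in P\}|$ the count to be bounded is essentially $\sum_T N(T)^2$.

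For each pair $(p, p') \in P^2$, the locus $\{T \in G : T(p) = p'\}$ is the curve $(\lambda, p'_x - \lambda p_x, p'_y - \lambda^{-1} p_y)$ parametrized by $\lambda \neq 0$. Under the change of variables $w = \lambda v$ this becomes a line $L_{p, p'} = \{(\lambda, p'_x - \lambda p_x, \lambda p'_y - p_y) : \lambda \in \mathbb R\}$ in $\mathbb R^3$. We thus obtain a family $\mathcal{L}$ of $|P|^2 = |A|^4$ lines, and two lines of $\mathcal{L}$ meet precisely at the unique $T$ common to the two associated pairs.

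I would then invoke the Guth-Katz line-incidence theorem after verifying its hypotheses. Requiring $L_{p,p'}$ to lie on a plane $\alpha \lambda + \beta u + \gamma w = \delta$ forces the pairs $(p_x, p'_y)$ and $(p'_x, p_y)$ to lie on prescribed lines in $\mathbb R^2$, and since both live in $A \times A$ this gives at most $|A|$ possibilities for each, so at most $|A|^2 = \sqrt{|\mathcal{L}|}$ lines of $\mathcal{L}$ on any plane. Through any point of $\mathbb R^3$, at most $|P| = |A|^2$ lines of $\mathcal{L}$ pass, by a similar direct check. The Guth-Katz theorem then yields $|\{T : N(T) \geq k\}| \ll |A|^6/k^2$ for $2 \leq k \leq |A|^2$, and dyadic summation produces $\sum_T N(T)^2 \ll |A|^6 \log |A|$.

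The main obstacle is the careful verification of all geometric hypotheses of Guth-Katz, in particular ruling out too many of the lines $L_{p,p'}$ on a common regulus, which requires a direct calculation analogous to the coplanar check. One also has to handle separately the degenerate 8-tuples in which some rectangle side vanishes, but these contribute only $O(|A|^6)$ and are absorbed into the main term.
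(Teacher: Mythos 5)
The paper does not actually prove Theorem \ref{rectanglesthm}; it quotes it from \cite{rectangles}, and your sketch is essentially the argument given there: the Elekes--Sharir--Guth--Katz framework for the symmetry group of the form $(x-x')(y-y')$, the change of variables turning the stabilising curves into lines in $\mathbb{R}^3$, and the Guth--Katz incidence theorem. Two caveats. First, your reduction is internally inconsistent: with $p_1=(a_1,a_7)$, $p_2=(a_2,a_8)$, $p_3=(a_5,a_3)$, $p_4=(a_6,a_4)$ the equation is indeed parallelism of $p_1-p_2$ and $p_3-p_4$, but your group $G$ preserves the Minkowski form rather than directions, and a short computation shows that some $T\in G$ carries $(p_1,p_2)$ to $(p_3,p_4)$ exactly when $(a_1-a_2)(a_7-a_8)=(a_5-a_6)(a_3-a_4)$, which is not the displayed equation. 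This happens to be harmless, because swapping the labels $(a_3,a_4)\leftrightarrow(a_7,a_8)$ is a bijection between the two solution sets, but the cleaner choice (and the one made in \cite{rectangles}) is $p_1=(a_1,a_3)$, $p_2=(a_2,a_4)$, $p_3=(a_5,a_7)$, $p_4=(a_6,a_8)$, for which the equation literally asserts equality of Minkowski distances and $G$ acts as the isometry group. Second, the regulus condition is not a loose end you may defer: the Guth--Katz bound on $k$-rich points needs only the planarity hypothesis for $k\geq 3$, but the $k=2$ case --- which is required to bring the count down from the trivial $|A|^8$ to $O(|A|^6)$ per dyadic level --- also needs that no regulus contain more than $O(|A|^2)$ lines of the family, and verifying this for the lines $L_{p,p'}$ is precisely the technical content of \cite{rectangles}. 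Everything else in your outline (the planarity check, the injectivity of $(p,p')\mapsto L_{p,p'}$, the $O(|A|^6)$ bound for degenerate tuples, and the dyadic summation) is correct.
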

The same result holds with the minus signs changed to plus signs. The proof of Theorem \ref{rectanglesthm} in \cite{rectangles} closely follows the work of Guth and Katz, and is an analogue of the following result from \cite{GK}:

\begin{theorem} \label{GKthm} For any finite set $A \subset \mathbb R$, the number of solutions to the equation
$$(a_1 - a_2)^2+(a_3 - a_4)^2=(a_5-a_6)^2+(a_7-a_8)^2$$
such that $a_1,\dots,a_8 \in A$ is $O(|A|^6 \log |A|)$.
\end{theorem}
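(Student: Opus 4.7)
The plan is to follow the Elekes--Sharir framework employed by Guth and Katz in their breakthrough work on the distinct-distances problem. Set $P = A \times A$, so $|P|=N:=|A|^2$, and observe that $(a_1-a_2)^2+(a_3-a_4)^2$ is exactly the squared Euclidean distance between the points $(a_1,a_3)$ and $(a_2,a_4)$ of $P$. Writing $\delta(p,q)$ for this distance, the quantity to be bounded is
$$Q := |\{(p_1,p_2,p_3,p_4) \in P^4 : \delta(p_1,p_2) = \delta(p_3,p_4)\}|,$$
and the claim becomes $Q = O(N^3 \log N)$.

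The first step would be the Elekes--Sharir reduction: $\delta(p_1,p_2)=\delta(p_3,p_4)$ holds if and only if there is a (generically unique) orientation-preserving rigid motion $g$ of $\R^2$ with $g(p_1)=p_3$ and $g(p_2)=p_4$. Parametrising the $3$-dimensional group of such motions (for instance by centre of rotation together with half-angle, encoded as a point in $\R^3$), the set of motions sending a fixed $p$ to a fixed $q$ traces out a line $\ell_{p,q} \subset \R^3$. The map $(p,q) \mapsto \ell_{p,q}$ is injective, and $\ell_{p_1,p_3} \cap \ell_{p_2,p_4} \neq \emptyset$ precisely when $\delta(p_1,p_2)=\delta(p_3,p_4)$. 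Hence, after discarding degenerate cases where $p_1=p_2$ and $p_3=p_4$ (contributing only $O(N^2)$, a lower-order term), $Q$ equals the number of ordered intersecting pairs in the family $\mathcal{L}$ of $N^2$ lines in $\R^3$.

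The second step would be to invoke the Guth--Katz incidence theorem for lines in $\R^3$: provided no plane and no doubly-ruled quadric surface (regulus) contains more than $\sqrt{|\mathcal{L}|}$ of the lines, the number of points where exactly $r$ lines meet is $O(|\mathcal{L}|^{3/2}/r^2)$ for each $r \geq 2$. Weighting this bound by $r^2$ to convert point-multiplicities into ordered pairs of intersecting lines, and summing dyadically over $r$, one obtains that the total number of intersecting ordered pairs in $\mathcal{L}$ is $O(|\mathcal{L}|^{3/2} \log |\mathcal{L}|)$. With $|\mathcal{L}|=N^2$ this yields $Q \ll N^3 \log N = |A|^6 \log |A|$, as desired.

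The main obstacle, and the technically hardest part, would be verifying the two non-degeneracy hypotheses on $\mathcal{L}$: that no plane, and no regulus, contains more than $O(N)$ of the lines $\ell_{p,q}$. The planar bound is a relatively soft consequence of the rigid-motion parametrisation. The regulus bound is the algebraic heart of the Guth--Katz argument: one must show that if many $\ell_{p,q}$ lie along a single ruling of a doubly-ruled quadric, then the corresponding pairs $(p,q)$ must satisfy a highly restrictive algebraic relation that is incompatible with the Cartesian structure $P = A \times A$. In \cite{GK} this is handled via the polynomial ham-sandwich theorem together with properties of flecnode polynomials from classical algebraic geometry, and faithfully reproducing this analysis would be the most delicate step of the proof.
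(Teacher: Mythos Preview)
The paper does not actually prove Theorem~\ref{GKthm}; it is quoted as a known result from Guth and Katz \cite{GK} and used as a black box (see the sentence following the statement: ``Theorems \ref{GKthm} and \ref{rectanglesthm} are special cases of more general geometric results which were proved respectively in \cite{GK} and \cite{rectangles}''). So there is nothing in the paper to compare your argument against, and any correct sketch of the Guth--Katz proof is acceptable here.

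Your outline of the Elekes--Sharir/Guth--Katz approach is essentially correct, but two small points deserve correction. First, the non-degeneracy hypotheses (at most $O(N)$ lines in any plane or regulus) hold for the Elekes--Sharir line family associated to \emph{any} set of $N$ points in the plane; the Cartesian product structure $P=A\times A$ plays no role there, and the verification is by direct geometric analysis of rigid motions rather than by the flecnode polynomial. The flecnode polynomial and polynomial partitioning enter in proving the incidence bound itself (the $O(|\mathcal{L}|^{3/2}/r^2)$ estimate for $r$-rich points), not in checking the plane/regulus conditions. Second, you should also account for pairs of parallel lines in $\mathcal{L}$, which correspond to quadruples related by a translation; Guth and Katz handle this separately, and it contributes only $O(N^3)$.
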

Once again, the same bound holds for the equation
$$(a_1 + a_2)^2+(a_3 + a_4)^2=(a_5+a_6)^2+(a_7+a_8)^2.$$
In fact, Theorems \ref{GKthm} and \ref{rectanglesthm} are special cases of more general geometric results which were proved respectively in \cite{GK} and \cite{rectangles}, but here they are stated only in the forms in which they will be used in this paper.

Theorem \ref{GKthm} can be used to prove the following variation of Lemma \ref{MORNSlemma}:
\begin{lemma} \label{newlemma}
For any finite sets $A,B \in \mathbb R$,
$$E^+(A)|\{a+(b_1+b_2)^2:a \in A, b_1,b_2 \in B\}|^2 \gg \frac{|A|^4|B|^2}{\log |B|}.$$
\end{lemma}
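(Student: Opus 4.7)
The plan is to follow the template of Lemma \ref{MORNSlemma}, but to invoke Theorem \ref{GKthm} (in its sum-of-squares version) in place of Theorem \ref{rectanglesthm}. Let $S := \{a+(b_1+b_2)^2 : a \in A,\, b_1,b_2 \in B\}$ and let $r_S(x)$ count triples $(a,b_1,b_2) \in A \times B^2$ with $a+(b_1+b_2)^2 = x$. Since $\sum_x r_S(x) = |A||B|^2$, a first application of Cauchy--Schwarz yields
$$|A|^2|B|^4 \;\leq\; |S| \cdot N, \qquad N := \sum_x r_S(x)^2.$$
Writing $r(t) := r_{B+B}(t)$, the quantity $N$ is the number of sextuples $(a,a',b_1,b_2,b_1',b_2') \in A^2 \times B^4$ with $a+(b_1+b_2)^2 = a'+(b_1'+b_2')^2$, and can be reorganised as
$$N \;=\; \sum_{d} r_{A-A}(d)\, f(d), \qquad f(d) \;:=\; \sum_{t,t'\,:\,(t')^2 - t^2 = d} r(t) r(t').$$

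A second application of Cauchy--Schwarz then gives $N^2 \leq E^+(A)\sum_d f(d)^2$. The key observation is that $\sum_d f(d)^2$ is a weighted count of quadruples $(t,t',s,s')$ satisfying $(t')^2 - t^2 = (s')^2 - s^2$, or equivalently $t^2 + (s')^2 = s^2 + (t')^2$. Unpacking the weights $r(t) = r_{B+B}(t)$, this sum equals the number of $8$-tuples $(b_1,\ldots,b_8) \in B^8$ satisfying
$$(b_1+b_2)^2 + (b_7+b_8)^2 \;=\; (b_3+b_4)^2 + (b_5+b_6)^2,$$
which is exactly the equation controlled by the sum-version of Theorem \ref{GKthm}, supplying $\sum_d f(d)^2 \ll |B|^6 \log|B|$.

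Assembling the two Cauchy--Schwarz inequalities with this bound gives $|A|^4|B|^8 \ll |S|^2 \cdot E^+(A) \cdot |B|^6 \log|B|$, which rearranges to the claimed $E^+(A) |S|^2 \gg |A|^4|B|^2/\log|B|$. The main obstacle I anticipate is purely conceptual: one has to recognise that the second moment of $f$, which is morally an ``energy of $(B+B)^2$'', expands into precisely the sum-of-two-squares equation for which the Guth--Katz-type bound of Theorem \ref{GKthm} applies. Once that alignment is spotted, the proof is a routine double application of Cauchy--Schwarz, with no need for Szemer\'{e}di--Trotter, multiplicative energy estimates, or the Balog--Szemer\'{e}di--Gowers theorem.
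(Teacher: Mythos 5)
Your argument is correct and is essentially the same as the paper's proof: both double-count solutions to $a+(b_1+b_2)^2=a'+(b_1'+b_2')^2$, apply Cauchy--Schwarz twice (once against $|S|$, once to split off $E^+(A)$), and bound the remaining ``energy of $(B+B)^2$'' by the sum-of-two-squares version of Theorem \ref{GKthm}. Your $f(d)$ is just the paper's $r_{(B+B)^2-(B+B)^2}(d)$ under a different name, so the two proofs coincide line for line.
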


\begin{proof} The proof proceeds by the familiar method of double counting the number of solutions to the equation
\begin{equation}
a_1+(b_1+b_2)^2=a_2+(b_3+b_4)^2
\label{soln}
\end{equation}
such that $a_i \in A$ and $b_i \in B$. Let $S$ denote the number of solutions to \eqref{soln} and write
$$A+(B+B)^2:=\{a+(b_1+b_2)^2:a \in A, b_1,b_2 \in B\}.$$
By the Cauchy-Schwarz inequality
$$S \geq \frac{|A|^2|B|^4}{|A+(B+B)^2|}.$$
On the other hand, also by the Cauchy-Schwarz inequality
\begin{align*}
S^2&=\left(\sum_{x}r_{A-A}(x)r_{(B+B)^2-(B+B)^2}(x)\right)^2
\\ &\leq \left(\sum_x r^2_{A-A}(x) \right) \left(\sum_x r_{(B+B)^2-(B+B)^2}^2(x) \right)
\\&=E^+(A) \left(\sum_x r_{(B+B)^2-(B+B)^2}^2(x) \right)
\end{align*}
Theorem \ref{GKthm} tells us that $ \left(\sum_x r_{(B+B)^2-(B+B)^2}^2(x) \right)=O(|B|^6\log |B|)$. Therefore
\begin{align*}
|A|^4|B|^8 &\leq  |A+(B+B)^2|^2 S^2
\\ & \ll  |A+(B+B)^2|^2 E^+(A) |B|^6 \log|B|.
\end{align*}
After rearranging this inequality, we obtain the desired result.\end{proof}

Unfortunately, we are not aware of a proof of Lemma \ref{newlemma} which does not use the deep results from \cite{GK}.

\section{Five variable expander}

It is now straightforward to use the results from the previous section to prove the result on the new five variable expander.

\begin{theorem}
For any finite set $A\subset \mathbb R$
$$|\{(a_1+a_2+a_3+a_4)^2+\log a_5 :a_i \in A \}| \gg \frac{|A|^2}{\log |A|}.$$
\end{theorem}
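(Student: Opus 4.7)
The plan is to apply Lemma \ref{newlemma} with $\log A$ in the role of the set whose additive energy appears on the left, and $A+A$ in the role of the set whose pairwise sums get squared. With this substitution, the set
$$\{\, l + (b_1+b_2)^2 : l \in \log A,\, b_1, b_2 \in A+A\,\}$$
is precisely the target set $S := \{(a_1+a_2+a_3+a_4)^2 + \log a_5 : a_i \in A\}$, since $(A+A)+(A+A) = A+A+A+A$. The lemma then yields
$$E^+(\log A) \cdot |S|^2 \gg \frac{|A|^4\, |A+A|^2}{\log|A+A|}.$$

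The crucial observation, and essentially the only idea needed beyond what the previous section already supplies, is that the map $a \mapsto \log a$ identifies the multiplicative structure of $A$ with the additive structure of $\log A$. (Here we assume without loss of generality that $A$ consists of positive reals, after passing to a subset of size at least $|A|/2$.) Consequently $E^+(\log A) = E^*(A)$, and Solymosi's theorem (Theorem \ref{solymosi}) supplies the matching upper bound $E^*(A) \ll |A+A|^2 \log |A|$.

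Substituting this into the inequality above, the two factors of $|A+A|^2$ cancel exactly, leaving
$$|S|^2 \gg \frac{|A|^4}{\log^2 |A|},$$
from which the claimed bound $|S| \gg |A|^2 / \log |A|$ follows by taking square roots. There is no genuine obstacle to overcome: the entire argument is a single line of algebra once one spots that $\log$ converts the multiplicative energy controlled by Solymosi's theorem into precisely the additive energy demanded on the left-hand side of Lemma \ref{newlemma}. The depth of the statement is entirely carried by the two inputs being invoked, namely the Guth--Katz result (Theorem \ref{GKthm}) hidden inside Lemma \ref{newlemma} and Solymosi's energy bound.
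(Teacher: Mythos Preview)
Your proof is correct and follows exactly the same approach as the paper: substitute $\log A$ for $A$ and $A+A$ for $B$ in Lemma~\ref{newlemma}, identify $E^+(\log A)=E^*(A)$, and cancel the $|A+A|^2$ factor against Solymosi's bound (Theorem~\ref{solymosi}). Your extra remarks about passing to a positive subset and about $\log|A+A|$ versus $\log|A|$ are minor refinements the paper leaves implicit.
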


\begin{proof}
Apply Lemma \ref{newlemma} with $A= \log A$ and $B=A+A$. We have
$$E^+(\log(A)) |\{(a_1+a_2+a_3+a_4)^2+\log a_5 : a_i \in A\}|^2 \gg \frac{|A|^4|A+A|^2}{\log |A|}.$$
Note that $\log a_1 + \log a_2=\log a_3+ \log a_4$ if and only if $a_1a_2=a_3a_4$, and so $E^+(\log(A))=E^*(A)$. We can apply Theorem \ref{solymosi} to deduce that 
$$E^+(\log A) \ll |A+A|^2 \log |A|.$$ 
It then follows that
$$|\{(a_1+a_2+a_3+a_4)^2+\log a_5 : a_i \in A\}|^2 \gg \frac{|A|^4}{\log^2 |A|},$$
which completes the proof.

\end{proof}

\section{Three variable expanders}

In a recent paper of Konyagin and Shkredov \cite{KS2}, a new characteristic for a finite set of real numbers $A$ was considered. Define $d_*(A)$ by the formula
$$d_*(A)= \min_{t>0} \min_{\emptyset \neq Q,R \subset \mathbb R \setminus \{0\}} \frac{|Q|^2|R|^2}{|A|t^3},$$
where the second minimum is taken over all $Q$ and $R$ such that $\max \{|Q|,|R|\} \geq |A|$ and such that for every $a\in A$, the bound $|Q \cap aR^{-1}| \geq t$ holds. Konyagin and Shkredov proved the following lemma:

\begin{lemma}[Lemma 13,\cite{KS2}] \label{KSlemma}
For any $A,B \subset \mathbb R$ and any $\tau \geq 1$,
$$|\{x : r_{A-B}(x) \geq \tau \}| \ll \frac{|A||B|^2}{\tau^3}d_*(A).$$
\end{lemma}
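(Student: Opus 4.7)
The plan is to couple the multiplicative structure of $A$ encoded by $d_*(A)$ to the popular differences via a double count closed by the Szemer\'{e}di-Trotter theorem. Write $P_\tau := \{x : r_{A-B}(x) \geq \tau\}$ for the set to be bounded. By the definition of $d_*(A)$, I fix sets $Q,R \subset \mathbb{R}\setminus\{0\}$ with $\max\{|Q|,|R|\} \geq |A|$ and a parameter $t>0$ realizing (up to arbitrarily small loss) the minimum, so that every $a \in A$ admits at least $t$ representations $a = qr$ with $(q,r) \in Q \times R$, and
$$d_*(A) = \frac{|Q|^2|R|^2}{|A|\,t^3}.$$
I then introduce the set of quadruples
$$\mathcal{N} := \{(q,r,b,x) \in Q \times R \times B \times P_\tau : qr - b = x\}$$
and estimate $|\mathcal{N}|$ in two ways.

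\emph{Lower bound.} For each $x \in P_\tau$ there are at least $\tau$ representations $x = a - b$ with $a \in A$, $b \in B$, and each such $a$ lifts to at least $t$ pairs $(q,r) \in Q \times R$ with $qr = a$. Therefore $|\mathcal{N}| \geq \tau t |P_\tau|$.

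\emph{Upper bound.} Rewriting the defining equation as $r = \tfrac{1}{q}\,x + \tfrac{b}{q}$, each pair $(q,b) \in Q \times B$ defines a line $\ell_{q,b}$ in the $(x,r)$-plane; the slope $1/q$ recovers $q$ and the intercept $b/q$ then recovers $b$, so the $|Q||B|$ lines are pairwise distinct. Elements of $\mathcal{N}$ are exactly the incidences between these lines and the $|P_\tau||R|$ points of $P_\tau \times R$, so the Szemer\'{e}di-Trotter theorem gives
$$|\mathcal{N}| \ll \bigl(|P_\tau||R||Q||B|\bigr)^{2/3} + |P_\tau||R| + |Q||B|.$$
When the first (main) term dominates, comparing the two bounds and cubing yields
$$|P_\tau| \ll \frac{|Q|^2|R|^2|B|^2}{\tau^3 t^3} = \frac{|A|\,|B|^2\,d_*(A)}{\tau^3},$$
which is the claim.

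The main obstacle I foresee is the disposal of the two trivial Szemer\'{e}di-Trotter terms. The case $|P_\tau||R| \gtrsim \tau t |P_\tau|$ forces $|R| \gtrsim \tau t$, and the case $|Q||B| \gtrsim \tau t |P_\tau|$ forces $|P_\tau| \lesssim |Q||B|/(\tau t)$. To handle these I would combine the trivial estimate $|P_\tau| \leq |A||B|/\tau$, the pigeonhole inequality $|Q||R| \geq |A|t$ (from $\sum_{a \in A}|Q \cap aR^{-1}| \geq |A|t$), and the structural constraint $\max\{|Q|,|R|\} \geq |A|$ in the definition of $d_*(A)$ to verify that the claimed inequality still holds in each boundary regime. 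I expect this bookkeeping to be mechanical, but to be precisely where the full force of the definition of $d_*(A)$ is used.
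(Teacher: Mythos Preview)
The paper does not supply its own proof of this lemma: it is quoted verbatim from \cite{KS2} and the only remark offered is that ``the proof uses the Szemer\'{e}di--Trotter Theorem.'' Your proposal is a correct reconstruction of exactly such an argument. The double count of $\mathcal{N}$ against incidences between the $|Q||B|$ lines $r = x/q + b/q$ and the $|P_\tau||R|$ points of $P_\tau \times R$ is the right set-up, and your main-term computation is clean.

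The residual bookkeeping you flag is indeed mechanical and does close. For the point-dominant term, $\tau t |P_\tau| \ll |P_\tau||R|$ gives $|R| \gg \tau t$; combining this with $|P_\tau| \leq |A||B|/\tau$, $t \leq \min(|Q|,|R|)$, $\tau \leq \min(|A|,|B|)$, and the dichotomy $|Q| \geq |A|$ or $|R| \geq |A|$ yields $|P_\tau| \ll |Q|^2|R|^2|B|^2/(\tau^3 t^3)$ in either sub-case. For the line-dominant term, $|P_\tau| \ll |Q||B|/(\tau t)$ together with $t \leq |Q|$, $\tau \leq |B|$, and again splitting on which of $|Q|,|R|$ is at least $|A|$, gives the same conclusion. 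So the constraint $\max\{|Q|,|R|\} \geq |A|$ in the definition of $d_*(A)$ is precisely what makes both boundary regimes harmless, as you anticipated.
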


The proof uses the Szemer\'{e}di-Trotter Theorem. Lemma \ref{KSlemma} generalises an earlier result in which the bound
\begin{equation}
|\{x : r_{A-B}(x) \geq \tau \}| \ll \frac{|A||B|^2}{\tau^3}d(A)
\label{KSlemma2}
\end{equation}
was established, where $d(A)=\min_{C \neq \emptyset}\frac{|AC|^2}{|A||C|}.$ See \cite[Lemma 7]{RRS} for a proof. As pointed out in \cite{KS2}, $d_*(A) \leq d(A)$, since for any non empty $C$ we can take $t=|C|$, $Q=AC$ and $R=C^{-1}$ in the definition of $d_*(A)$.

Using the language of \cite{KS2} and \cite{Sh}, we could rephrase Lemma \ref{KSlemma} by saying that $A$ is a Szemer\'{e}di-Trotter set with $O(d_*(A))$.

We can use Lemma \ref{KSlemma} to prove the following lemma. No originality is claimed here - we are essentially copying the arguments from \cite{RRS} and predecessors with the stronger Lemma \ref{KSlemma} in place of the bound \eqref{KSlemma2} - but we include the proof for completeness.

\begin{lemma} \label{A-A} For any finite set $A \subset \mathbb R$,
$$|A-A| \gg \frac{|A|^{8/5}}{d^{3/5}_*(A)\log^{2/5}|A|}.$$
\end{lemma}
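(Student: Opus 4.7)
The plan is to combine an upper bound on the third additive energy $E_3^+(A)$, obtained from Lemma \ref{KSlemma}, with H\"older's inequality and a popularity-based decomposition, following the approach of \cite{RRS}.

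First I would dyadically decompose the representation function $r_{A-A}$ via its level sets $D_\tau = \{x : r_{A-A}(x) \geq \tau\}$. Lemma \ref{KSlemma} applied with $B=A$ gives the Szemer\'edi--Trotter-type bound $|D_\tau| \ll |A|^3 d_*(A)/\tau^3$. Combining this with the trivial estimate $|D_\tau| \leq |A-A|$ and summing a geometric series over dyadic scales $1 \leq \tau \leq |A|$ yields $E_3^+(A) \ll |A|^3 d_*(A) \log|A|$.

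Next, by H\"older applied to $|A|^2 = \sum_x r_{A-A}(x)$ with exponents $3$ and $3/2$, we get $|A|^6 \leq E_3^+(A)\,|A-A|^2$. The naive combination with the energy bound would yield only $|A-A| \gtrapprox |A|^{3/2}/d_*(A)^{1/2}$, which is weaker than the target. To sharpen the bound to $|A|^{8/5}/d_*(A)^{3/5}$, I would use a popularity-based decomposition: split $\sum_x r_{A-A}(x) = |A|^2$ into contributions from popular differences (where $r_{A-A}(x) \geq \tau$) and unpopular ones, and apply H\"older with different exponents to each piece. The popular contribution is controlled by the Szemer\'edi--Trotter bound from Lemma \ref{KSlemma}, while the unpopular one uses the trivial bound $|D_\tau| \leq |A-A|$; optimizing the threshold $\tau$ balances the two and should produce the stated exponents. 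An alternative route is to feed the $E_3^+$ bound into Lemma \ref{li} (with $B=A$) in combination with a Pl\"unnecke--Ruzsa control of $|A+A-A|$, which again leads via algebraic manipulation to the same sharper estimate.

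The main obstacle is the careful combinatorial bookkeeping of constants and logarithmic factors through the dyadic sums, along with the two-parameter optimization required to extract the exponents $8/5$ and $3/5$ (rather than the $3/2$ and $1/2$ produced by a single direct H\"older estimate).
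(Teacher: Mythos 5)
You correctly identify the first ingredient: applying Lemma~\ref{KSlemma} with $B=A$ and summing dyadically gives $E_3^+(A)\ll |A|^3 d_*(A)\log|A|$. And your direct H\"older estimate $|A|^6\le E_3^+(A)|A-A|^2$ is right, as is your observation that this alone only yields $|A-A|\gg |A|^{3/2}/(d_*(A)^{1/2}\log^{1/2}|A|)$. The problem is that neither of your two proposed remedies actually closes the gap to the exponent $8/5$.

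The popularity decomposition you describe --- splitting $\sum_x r_{A-A}(x)=|A|^2$ at a threshold $\tau$, using the level-set bound from Lemma~\ref{KSlemma} on the popular part and the trivial bound on the unpopular part --- only involves the energies $E_k^+(A)$ and the level sets of $r_{A-A}$. Carrying it out with any choice of H\"older exponents and optimizing $\tau$ still produces $|A-A|\gg |A|^{3/2}/d_*^{1/2}(A)$ up to logs; there is no mechanism in that scheme for the exponent on $|A-A|$ to exceed $1$, because $|A-A|$ only enters once, through the trivial bound on the unpopular part. Your alternative route is closer to the paper but has a genuine error: Lemma~\ref{li} with $B=A$ reads $|A|^2(E_{1.5}^+(A))^2\le E_3^+(A)\,E^+(A,A-A)$, so what is needed is an \emph{upper} bound on the additive energy $E^+(A,A-A)$, not a Pl\"unnecke--Ruzsa control of $|A+A-A|$; Pl\"unnecke--Ruzsa bounds sumset cardinalities and does not give an upper bound on $E^+(A,A-A)$ (Cauchy--Schwarz would convert $|A+A-A|$ into a \emph{lower} bound on that energy, which is the wrong direction).

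The missing lemma is a second application of Lemma~\ref{KSlemma} to the mixed representation function $r_{A-F}$: for any finite $F$, splitting $\sum_x r_{A-F}^2(x)$ at a threshold $\Delta$ and using $|\{x:r_{A-F}(x)\ge\tau\}|\ll |A||F|^2 d_*(A)/\tau^3$ gives $E^+(A,F)\ll\Delta|A||F|+|A||F|^2d_*(A)/\Delta$, and choosing $\Delta=(|F|d_*(A))^{1/2}$ yields $E^+(A,F)\ll |A||F|^{3/2}d_*(A)^{1/2}$. Taking $F=A-A$ and plugging into Lemma~\ref{li} together with the H\"older step $|A|^6\le(E_{1.5}^+(A))^2|A-A|$ produces $|A|^8\le E_3^+(A)E^+(A,A-A)|A-A|\ll |A|^4 d_*^{3/2}(A)\log|A|\,|A-A|^{5/2}$, and it is precisely the $|A-A|^{3/2}$ coming out of $E^+(A,A-A)$ that pushes the exponent on $|A-A|$ up to $5/2$ and hence the final bound to $|A|^{8/5}/d_*^{3/5}(A)$. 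Without this self-referential energy estimate, the target exponent is out of reach.
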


\begin{proof} First, we will prove two energy bounds. Note that, by Lemma \ref{KSlemma},
\begin{align} \label{E3}
E_3^+(A)&=\sum_x r_{A-A}^3(x) \nonumber
\\&=\sum_{j\geq 1} \sum_{x:2^{j-1}\leq r_{A-A}(x) < 2^j}r_{A-A}^3(x) \nonumber
\\& \ll |A|^3d_*(A) \log |A|.
\end{align}

Similarly, for any $F \subset \mathbb R$,
\begin{align*}
E^+(A,F)&=\sum_x r_{A-F}^2(x)
\\&=\sum_{x:r_{A-F}<\Delta}r_{A-F}^2(x)+\sum_{j\geq 1} \sum_{x:\Delta2^{j-1} \leq r_{A-F}(x) < \Delta2^j}r_{A-F}^2(x)
\\& \ll \Delta|A||F|+\frac{ |A||F|^2d_*(A)}{\Delta}.
\end{align*}
We choose $\Delta=(|F|d_*(A))^{1/2}$, and thus conclude that
\begin{equation}
E(A,F) \ll |A||F|^{3/2}d_*(A)^{1/2}.
\label{Ebound}
\end{equation}

Now, by H\"{o}lder's inequality,
\begin{align*}
|A|^6 &= \left ( \sum_{x \in A-A} r_{A-A}(x) \right)^3
\\ & \leq \left(\sum_x r_{A-A}^{3/2} \right )^2|A-A|
\\&=(E_{1.5}^+(A))^2|A-A|.
\end{align*}
Finally, applying Lemma \ref{li} as well as inequalities \eqref{E3} and \eqref{Ebound}, we have
\begin{align*}
|A|^8 &\leq E_3^+(A)E^+(A,A-A)|A-A|
\\& \ll |A|^4 |A-A|^{5/2} d_*^{3/2}(A) \log |A|,
\end{align*}
and it follows that
$$ |A-A| \gg \frac{|A|^{8/5}}{d_*^{3/5}(A) \log^{2/5}|A|}.$$

\end{proof}

The following similar result for sum sets follows from a combination of the work in \cite{Sh} and \cite{KS2}:
\begin{lemma} \label{ShSum} For any finite set $A \subset \mathbb R$,
$$|A+A| \gtrapprox \frac{|A|^{58/37}}{d^{21/37}_*(A)}.$$
\end{lemma}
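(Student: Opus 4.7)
My plan is to follow the approach of Shkredov \cite{Sh}, which establishes the claimed type of bound with the weaker quantity $d(A)$ from \eqref{KSlemma2} in place of $d_*(A)$, and then to upgrade the argument by replacing every appeal to \eqref{KSlemma2} with the sharper Lemma \ref{KSlemma} from \cite{KS2}.

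The first step is to extract two basic sum-set energy estimates from Lemma \ref{KSlemma}, in exactly the same manner in which \eqref{E3} and \eqref{Ebound} were derived in the proof of Lemma \ref{A-A}. Writing $r_{A+A}(x)=r_{A-(-A)}(x)$ and applying Lemma \ref{KSlemma} with $B=-A$, a dyadic decomposition of the level sets $\{x:r_{A+A}(x)\approx 2^j\}$ gives
$$\sum_x r_{A+A}^3(x) \ll |A|^3 d_*(A)\log|A|.$$
The same dyadic method, with an optimised cutoff $\Delta$ as in the derivation of \eqref{Ebound}, yields
$$E^+(A,F) \ll |A||F|^{3/2}d_*(A)^{1/2}$$
for every finite $F\subset\mathbb R$.

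The second step is to feed these two bounds into the chain of H\"older and Cauchy-Schwarz inequalities developed in \cite{Sh}, which combines difference-set energies, sum-set energies, and Pl\"unnecke-Ruzsa type manipulations between $|A+A|$ and $|A-A|$ to produce a polynomial inequality of the form
$$|A+A|^{37} \gtrapprox \frac{|A|^{58}}{d(A)^{21}}.$$
Every use of \eqref{KSlemma2} in Shkredov's argument occurs only through the two energy bounds displayed above, so substituting $d_*(A)$ for $d(A)$ throughout yields
$$|A+A|^{37} \gtrapprox \frac{|A|^{58}}{d_*(A)^{21}},$$
which is equivalent to the statement of the lemma.

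The main obstacle, were one to write a self-contained proof rather than cite \cite{Sh}, is that Shkredov's polynomial inequality is not obtained by a single clean H\"older step analogous to the one in the proof of Lemma \ref{A-A}. It requires a careful juggling of several higher-order additive energies of $A$ and $A+A$, together with iterated Pl\"unnecke-Ruzsa estimates, and would add considerable length. By contrast, the passage from \eqref{KSlemma2} to Lemma \ref{KSlemma} is entirely transparent at the level of the two energy bounds above, so the only task is to verify that \cite{Sh} uses \eqref{KSlemma2} solely in that black-box fashion, which is the case.
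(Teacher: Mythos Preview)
Your proposal is essentially correct and coincides with the paper's treatment: the paper does not give a self-contained proof either, but simply observes that \cite{Sh} proves $|A+A|\gtrapprox |A|^{58/37}/D^{21/37}$ for any $D$ such that $A$ is a Szemer\'edi--Trotter set with $D$, and that Lemma~\ref{KSlemma} (from \cite{KS2}) says precisely that every $A$ is a Szemer\'edi--Trotter set with $O(d_*(A))$.

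One small correction of framing: the paper points out that \cite{Sh} is already stated at the level of an abstract parameter $D$ satisfying the level-set bound of Lemma~\ref{KSlemma}, not specifically for $d(A)$. Consequently you do not need to verify your claim that \cite{Sh} uses the Szemer\'edi--Trotter hypothesis only through the two particular energy bounds $E_3^+(A)\ll |A|^3 D\log|A|$ and $E^+(A,F)\ll |A||F|^{3/2}D^{1/2}$; it suffices that Lemma~\ref{KSlemma} supplies the full level-set inequality with $D=d_*(A)$, which is exactly the hypothesis \cite{Sh} takes as input. This makes the deduction cleaner and avoids any need to inspect the internals of \cite{Sh}.
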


To be more precise, it was proven in \cite{Sh} that if $A$ is a Szemer\'{e}di-Trotter set with $D$, then $|A+A| \gtrapprox \frac{|A|^{58/37}}{D^{21/37}}$, and it was subsequently established in \cite{KS2} that any set $A$ is a Szemer\'{e}di-Trotter set with $O(d_*(A))$.

The methods used in \cite{Sh} are rather different from those used in this paper, and appear to be far from trivial. However, one can obtain a quantitatively weaker bound
\begin{equation}
|A+A| \gg \frac{|A|^{14/9}}{d^{5/9}_*(A)\log^{2/9}|A|}
\label{ShSumEasy}
\end{equation}
with a proof very similar to that of Lemma \ref{A-A} above. To see how this works, one can repeat the arguments from the proof of Theorem 1.2 in \cite{LiORN2}, but using Lemma \ref{KSlemma} in place of Lemma 3.2 from \cite{LiORN2}. This is worth noting, since the proofs of the main results in \cite{KS} and \cite{KS2}, that is the bound
$$\max \{ |A+A|, |AA| \} \gg |A|^{4/3+c},$$
for some $c>0$, both include applications of Lemma \ref{ShSum}. One can also obtain this sum-product estimate, albeit with a smaller value of $c$, by using the bound \eqref{ShSumEasy} instead of Lemma \ref{ShSum}.

The Balog-Szemer\'{e}di-Gowers Theorem tells us that if a set $A$ has large multiplicative energy, then there is a large subset $A'\subset A$ such that $|A'/A'|$ is small. We can then use sum-product theory to deduce that $A'+A'$, and thus also $A+A$, is large. The following result arrives at the same conclusion, but avoids applying the Balog-Szemer\'{e}di-Gowers Theorem, and therefore gives quantitatively better estimates for the problem at hand. The proof uses ideas from \cite{KS2}.

\begin{lemma} \label{BSG2} Let $A \subset \mathbb R$ and suppose that $E^*(A) \geq \frac{|A|^3}{K}$. Then 
$$|A-A| \gtrapprox \frac{|A|^{8/5}}{K^{6/5}}$$
and
$$|A+A| \gtrapprox \frac{|A|^{58/37}}{K^{42/37}}.$$
\end{lemma}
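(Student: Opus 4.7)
The plan is to reprove Lemmas \ref{A-A} and \ref{ShSum} with Lemma \ref{KSlemma} replaced by a variant in which the quantity $d_*(A)$ is directly supplanted by $O(K^2)$ using the hypothesis $E^*(A)\geq|A|^3/K$. Concretely, I would establish, for every $F\subset\mathbb R$ and every $\tau\geq 1$,
$$|\{x: r_{A-F}(x)\geq\tau\}| \lessapprox \frac{K^2|A||F|^2}{\tau^3}.$$
Substituting this (with $F=A$ and with $F=A-A$) into the proof of Lemma \ref{A-A} yields $E_3^+(A)\lessapprox K^2|A|^3$ and $E^+(A,A-A)\lessapprox K|A||A-A|^{3/2}$; Lemma \ref{li} combined with H\"older's inequality as in the proof of Lemma \ref{A-A} then gives $|A|^8\lessapprox K^3|A|^4|A-A|^{5/2}$, i.e. $|A-A|\gtrapprox|A|^{8/5}/K^{6/5}$. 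The analogous substitution in the proof of Lemma \ref{ShSum} from \cite{Sh} yields $|A+A|\gtrapprox|A|^{58/37}/K^{42/37}$.

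To obtain the level-set bound, I would first apply dyadic pigeonhole to $E^*(A)\geq|A|^3/K$. Combined with $\sum_q r_{A/A}(q)=|A|^2$, this produces a popular-ratio set $P\subset A/A$ and a scale $\tau_P$ with $r_{A/A}(q)\asymp\tau_P$ on $P$, $\tau_P^2|P|\gtrapprox|A|^3/K$ and $\tau_P|P|\leq|A|^2$; hence $\tau_P\gtrapprox|A|/K$, $|P|\lessapprox K|A|$, and for each $q\in P$ the multiplicative dilate $A\cap qA$ has size at least $\tau_P$. This popular-ratio structure takes the place of a Balog--Szemer\'edi--Gowers subset in the conventional approach. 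Following the proof of \cite[Lemma 13]{KS2}, I would then mount a Szemer\'edi--Trotter incidence problem between a point set constructed from the dilates $\{qA:q\in P\}$ and a line set indexed by $L_\tau=\{x: r_{A-F}(x)\geq\tau\}$ together with $P$. Each line coming from a pair $(x,q)\in L_\tau\times P$ picks up $\asymp\tau\tau_P$ incidences from the fibered representations of popular differences inside the dilate $qA$, yielding a lower incidence count $\gg\tau\tau_P|L_\tau||P|$. Combining the Szemer\'edi--Trotter upper bound with $|P|\lessapprox K|A|$ and $\tau_P\gtrapprox|A|/K$ then extracts the target $|L_\tau|\lessapprox K^2|A||F|^2/\tau^3$.

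The main obstacle is the balancing in the incidence step. A na\"ive setup loses additional powers of $K$ because the ``for every $a\in A$'' condition in the definition of $d_*$ is only guaranteed on average by the dyadic pigeonhole; the dilate structure $\{A\cap qA\}_{q\in P}$ must be exploited so that every element of $A$ participates in the incidence configuration by lying in enough of the dilates, rather than by passing to a popularity-refined subset of $A$ (which would reintroduce a BSG-type loss). Parcelling the contribution of each popular ratio $q\in P$ against the population of popular differences with precisely the right weights is what produces the $K^2$ exponent of \cite{KS2}, and in turn the exponents $K^{6/5}$ and $K^{42/37}$ in the final bounds on $|A-A|$ and $|A+A|$.
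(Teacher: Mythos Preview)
Your proposed level-set bound
\[
|\{x: r_{A-F}(x)\geq\tau\}| \lessapprox \frac{K^2|A||F|^2}{\tau^3}
\]
for the \emph{full} set $A$ is false in general, so the approach cannot succeed as stated. Take $A=A_1\cup A_2$ with $|A_1|=|A_2|=n/2$, where $A_1$ is a geometric progression and $A_2$ is an arithmetic progression (chosen disjoint). Then $E^*(A)\geq E^*(A_1)\gg n^3$, so $K=O(1)$; but the arithmetic part $A_2$ contributes $\gg n$ differences $x$ with $r_{A-A}(x)\geq r_{A_2-A_2}(x)\gg n$, whence $|\{x:r_{A-A}(x)\geq n/4\}|\gg n$, whereas your bound with $F=A$, $\tau=n/4$, $K=O(1)$ gives $O(1)$. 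The same example kills the derived bound $E_3^+(A)\lessapprox K^2|A|^3$, since $E_3^+(A)\geq E_3^+(A_2)\gg n^4$. The hypothesis $E^*(A)\geq|A|^3/K$ simply says nothing about the additive behaviour of a portion of $A$ that contributes negligibly to the multiplicative energy.

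Your worry that passing to a popularity-refined subset would ``reintroduce a BSG-type loss'' is precisely where you have been led astray, and is in fact backwards for this problem. The paper's proof \emph{does} pass to a subset: after the first dyadic pigeonhole producing the popular-ratio set $S_\tau$ (your $P$), a second pigeonhole on the identity $\sum_{a\in A}|A\cap aS_\tau|=\sum_{x\in S_\tau}|A\cap xA|\geq|S_\tau|\tau$ extracts $A'\subset A$ with $|A\cap aS_\tau|\asymp t$ for every $a\in A'$ and $|A'|t\gtrapprox|S_\tau|\tau$. One then reads off $d_*(A')\leq|A|^2|S_\tau|^2/(|A'|t^3)$ directly from the definition (take $Q=A$, $R=S_\tau^{-1}$) and applies Lemmas~\ref{A-A} and~\ref{ShSum} to $A'$ as black boxes. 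There is no loss because the conclusion concerns difference and sum sets, and $|A-A|\geq|A'-A'|$, $|A+A|\geq|A'+A'|$ hold trivially; the BSG inefficiency would arise only if one needed to pass from small $|A'A'|$ back to small $|AA|$, which is not the situation here. Chasing the inequalities then yields $|A-A|\gtrapprox|S_\tau|\tau^{11/5}/|A|^{8/5}\gtrapprox E^*(A)^{6/5}/|A|^2\geq|A|^{8/5}/K^{6/5}$, and the analogous chain with Lemma~\ref{ShSum} gives the $|A+A|$ bound.
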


\begin{proof} The idea here is to use the hypothesis that the energy is large in order to find a large subset $A' \subset A$ such that $d_*(A)$ is small, and to then apply Lemmas \ref{A-A} and \ref{ShSum} to complete the proof.

We can write
$$E^*(A)=\sum_x |A \cap xA|^2.$$
Note that
$$\sum_{x: |A \cap xA| \leq \frac{E^*(A)}{2|A|^2}} |A \cap xA|^2 \leq \frac{E^*(A)}{2|A|^2} \sum_x |A\cap xA| \leq \frac{E^*(A)}{2}$$
and so
$$\sum_{x: |A \cap xA| \geq \frac{E^*(A)}{2|A|^2}} |A \cap xA|^2 \geq \frac{E^*(A)}{2}.$$
Therefore, by a dyadic pigeonholing argument, there exists $\frac{E^*(A)}{2|A|^2} \leq \tau \leq |A|$ such that
$$\sum_{x : \tau \leq |A \cap xA| < 2\tau} |A \cap xA|^2 \gtrapprox E^*(A).$$
We label $S_{\tau}:=\{x : \tau \leq |A \cap xA| < 2\tau\}$, and thus we have
\begin{equation}
|S_{\tau}|\tau^2 \gtrapprox E^*(A).
\label{dyadic}
\end{equation}
Observe that
\begin{equation}
\label{eq1}
\sum_{a\in A} |A \cap aS_{\tau}| = \sum_{x \in S_{\tau}}|A \cap xA| \geq |S_{\tau}|\tau.
\end{equation}
Now apply another dyadic pigeonholing argument to obtain a subset $A' \subset A$ such that for all $a \in A'$
\begin{equation}
t \leq |A \cap aS_{\tau}| <2t,
\label{dstar}
\end{equation}
for some real number $0<t \leq |A|$, and such that
$$\sum_{a \in A'} |A \cap aS_{\tau}| \gtrapprox |S_{\tau}|\tau.$$
Therefore
\begin{equation}
|A'|t \gtrapprox |S_{\tau}|\tau.
\label{dyadic2}
\end{equation}
Note, since $t \leq |A|$, that
\begin{equation}
|A'| \gtrapprox \frac{|S_{\tau}|\tau}{|A|}.
\label{dyadic3}
\end{equation}

For every $a \in A'$, we have $|A \cap aS_{\tau}| \geq t$. Therefore, we can take 
$$t=t, Q=A, R=S_{\tau}^{-1}$$
in the definition of $d_*(A')$. We then have
\begin{equation}
d_*(A') \leq \frac{|A|^2|S_{\tau}|^2}{|A'|t^3}.
\end{equation}
Apply Lemma \ref{A-A} to get
\begin{align*}
|A-A| & \geq |A'-A'| \\
& \gtrapprox \frac{|A'|^{8/5}}{d_*^{3/5}(A')} \\
& \gg |A'|^{8/5} \left( \frac{|A'|t^3}{|A|^2|S_{\tau}|^2} \right )^{3/5} \\
& \gtrapprox \frac{(|S_{\tau}|\tau)^{9/5} |A'|^{2/5}}{|A|^{6/5}|S_{\tau}|^{6/5}} \\
& \gtrapprox \frac{(|S_{\tau}|\tau)^{9/5} \left(\frac{|S_{\tau}|\tau}{|A|}\right)^{2/5}}{|A|^{6/5}|S_{\tau}|^{6/5}}  \\
& = \frac{|S_{\tau}|\tau^{11/5}}{|A|^{8/5}} \\
& \gtrapprox \frac{E^*(A)\left( \frac{E^*(A)}{|A|^2} \right)^{1/5}}{|A|^{8/5}} \\
& \geq \frac{ \left( \frac{|A|^3}{K} \right)^{6/5}}{|A|^2} = \frac{|A|^{8/5}}{K^{6/5}}.
\end{align*}

Similarly, an application of Lemma \ref{ShSum} gives

\begin{align*}
|A+A| & \geq |A'+A'| \\
& \gtrapprox \frac{|A'|^{58/37}}{d_*^{21/37}(A')} \\
& \gg |A'|^{58/37} \left( \frac{|A'|t^3}{|A|^2|S_{\tau}|^2} \right )^{21/37} \\
& \gtrapprox \frac{(|S_{\tau}|\tau)^{63/37} |A'|^{16/37}}{|A|^{42/37}|S_{\tau}|^{42/37}} \\
& \gtrapprox \frac{(|S_{\tau}|\tau)^{63/37} \left(\frac{|S_{\tau}|\tau}{|A|}\right)^{16/37}}{|A|^{42/37}|S_{\tau}|^{42/37}}  \\
& = \frac{|S_{\tau}|\tau^{79/37}}{|A|^{58/37}} \\
& \gtrapprox \frac{E^*(A)\left( \frac{E^*(A)}{|A|^2} \right)^{5/37}}{|A|^{58/37}} \\
& \geq \frac{ \left( \frac{|A|^3}{K} \right)^{42/37}}{|A|^{68/37}} = \frac{|A|^{58/37}}{K^{42/37}}.
\end{align*}

\end{proof}

We are now ready to prove the new lower bounds for $|A(A-A)|$ and $|A(A+A)|$.

\begin{theorem} \label{A(A-A)}
For any finite set $A \subset \mathbb R$,
$$|A(A-A)| \gtrapprox |A|^{\frac{3}{2}+\frac{1}{34}}.$$
\end{theorem}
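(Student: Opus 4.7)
The plan is to combine Lemma \ref{MORNSlemma} with Lemma \ref{BSG2} via a standard dichotomy: either $|A(A-A)|$ is large directly, or else Lemma \ref{MORNSlemma} forces the multiplicative energy $E^*(A)$ to be large, which via Lemma \ref{BSG2} forces $|A-A|$ to be large, and the latter is bounded above by $|A(A-A)|$.

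First, apply Lemma \ref{MORNSlemma} with $B=A$ and $C=-A$ to obtain
$$E^*(A)|A(A-A)|^2 \gg \frac{|A|^6}{\log|A|}.$$
Writing $E^*(A)=|A|^3/K$, this rearranges to $K \lessapprox |A(A-A)|^2/|A|^3$. Since the hypothesis of Lemma \ref{BSG2} is satisfied with this $K$, we obtain
$$|A-A| \gtrapprox \frac{|A|^{8/5}}{K^{6/5}} \gtrapprox \frac{|A|^{8/5} \cdot |A|^{18/5}}{|A(A-A)|^{12/5}} = \frac{|A|^{26/5}}{|A(A-A)|^{12/5}}.$$

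Next, I would use the trivial observation that, for any nonzero $a_0 \in A$, the set $a_0(A-A) \subset A(A-A)$ has cardinality $|A-A|$ (and discarding a single zero element from $A$ changes nothing up to absolute constants). Hence $|A-A| \leq |A(A-A)|$, and substituting gives
$$|A(A-A)|^{17/5} \gtrapprox |A|^{26/5},$$
which yields $|A(A-A)| \gtrapprox |A|^{26/17} = |A|^{3/2+1/34}$, as required.

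No step here is hard in itself; the entire novelty sits in Lemma \ref{BSG2}, which has already been established in the preceding section. The only issue worth verifying is that the logarithmic losses from Lemma \ref{MORNSlemma} and Lemma \ref{BSG2} combine into a bound of the form $X \gg Y/(\log X)^c$ compatible with the $\gtrapprox$ notation, which is routine since the exponents $17/5$ and $12/5$ are fixed constants.
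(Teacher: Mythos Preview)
Your proof is correct and uses the same two ingredients as the paper (Lemma \ref{MORNSlemma} and Lemma \ref{BSG2}); the only difference is cosmetic, in that the paper splits into two cases at the threshold $K=|A|^{1/17}$ and applies one lemma in each case, whereas you chain the two bounds together and let the algebra find the balance. The two presentations are equivalent and yield the identical exponent $26/17=3/2+1/34$.
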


\begin{proof} Write $E^*(A)=\frac{|A|^3}{K}$. The proof considers two cases:

\textbf{Case 1} - Suppose that $K \geq |A|^{1/17}$. Then apply Lemma \ref{MORNSlemma}, which tells us that
$$\frac{|A|^6}{\log |A|} \leq E^*(A)|A(A-A)|^2 = \frac{|A|^3|A(A-A)|^2}{K}$$
and therefore
$$|A(A-A)| \gtrapprox |A|^{3/2}K^{1/2} \geq |A|^{\frac{3}{2}+\frac{1}{34}}.$$

\textbf{Case 2} - Suppose that $K \leq |A|^{1/17}$. Then, by Lemma \ref{BSG2}, 
$$|A(A-A)| \geq |A-A| \gtrapprox \frac{|A|^{8/5}}{K^{6/5}} \geq |A|^{\frac{8}{5}-\frac{6}{85}}=|A|^{\frac{3}{2}+\frac{1}{34}}.$$

\end{proof}

\begin{theorem} \label{A(A+A)}
For any finite set $A \subset \mathbb R$,
$$|A(A+A)| \gtrapprox |A|^{\frac{3}{2}+\frac{5}{242}}.$$
\end{theorem}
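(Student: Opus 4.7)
The plan is to mimic the proof of Theorem \ref{A(A-A)} almost verbatim, with the only change being that in the ``small $K$'' case we invoke the sum set bound from Lemma \ref{BSG2} rather than the difference set bound. Write $E^*(A) = |A|^3/K$ and split into two cases depending on the size of $K$, with the threshold chosen to balance the two bounds produced below.

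In the first case, when $K$ is large, I would apply Lemma \ref{MORNSlemma} with $B=C=A$, which gives
\[
E^*(A) |A(A+A)|^2 \gg \frac{|A|^6}{\log|A|},
\]
so that $|A(A+A)| \gtrapprox |A|^{3/2} K^{1/2}$. In the second case, when $K$ is small, I would simply use $|A(A+A)| \geq |A+A|$ together with the second inequality of Lemma \ref{BSG2}, giving $|A(A+A)| \gtrapprox |A|^{58/37}/K^{42/37}$.

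The threshold is chosen by equating the two exponents: solving $|A|^{3/2}K^{1/2} = |A|^{58/37}/K^{42/37}$ yields $K^{121/74} = |A|^{5/74}$, i.e.\ $K = |A|^{5/121}$. Plugging this into either bound produces the exponent $\tfrac{3}{2} + \tfrac{5}{242} = \tfrac{184}{121}$; for instance, in the first case one obtains $|A(A+A)| \gtrapprox |A|^{3/2}\cdot|A|^{5/242}$, and in the second case $58/37 - (42/37)(5/121) = 184/121$ by direct arithmetic.

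There is no real obstacle, since all the work has been done in Lemmas \ref{MORNSlemma} and \ref{BSG2}; the only thing to verify is the arithmetic of the exponent balance, in particular the identity $\tfrac{58}{37} - \tfrac{42}{37}\cdot\tfrac{5}{121} = \tfrac{184}{121} = \tfrac{3}{2} + \tfrac{5}{242}$. The sole conceptual point worth emphasizing, relative to the $A(A-A)$ case, is that Lemma \ref{BSG2} yields the stronger exponent $58/37$ rather than $8/5$ for the sum set, which is what makes the final exponent $5/242$ here distinct from (and slightly different in shape to) the $1/34$ obtained for $A(A-A)$.
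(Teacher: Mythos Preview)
Your proposal is correct and follows the paper's proof essentially verbatim: the paper also writes $E^*(A)=|A|^3/K$, splits at the threshold $K=|A|^{5/121}$, applies Lemma~\ref{MORNSlemma} for large $K$ and the sum set bound from Lemma~\ref{BSG2} (together with $|A(A+A)|\geq |A+A|$) for small $K$, and verifies the same arithmetic $\tfrac{58}{37}-\tfrac{42}{37}\cdot\tfrac{5}{121}=\tfrac{3}{2}+\tfrac{5}{242}$.
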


\begin{proof} Write $E^*(A)=\frac{|A|^3}{K}$. The proof considers two cases:

\textbf{Case 1} - Suppose that $K \geq |A|^{5/121}$. Then Lemma \ref{MORNSlemma} tells us that
$$|A(A+A)| \gtrapprox |A|^{3/2}K^{1/2} \geq |A|^{\frac{3}{2}+\frac{5}{242}}.$$

\textbf{Case 2} - Suppose that $K \leq |A|^{5/121}$. Then, by Lemma \ref{BSG2}, 
$$|A(A+A)| \geq |A+A| \gtrapprox \frac{|A|^{58/37}}{K^{42/37}} \geq |A|^{\frac{58}{37}-\frac{5.42}{121.37}}=|A|^{\frac{3}{2}+\frac{5}{242}}.$$

\end{proof}


\section{Acknowledgements}

I am grateful for the hospitality of the R\'{e}nyi Institute, where part of this work was done with support from Grant ERC-AdG. 321104. I also thank Antal Balog, Brendan Murphy, Orit Raz, Ilya Shkredov and Endre Szemer\'{e}di for helpful discussions.

\end{document}